\theoremstyle{plain}
\newtheorem{Thm}{Theorem}[section]
\newtheorem{Prop}[Thm]{Proposition}
\newtheorem{Cor}[Thm]{Corollary}
\newtheorem{Lem}[Thm]{Lemma}
\theoremstyle{definition}
\newtheorem{Expl}[Thm]{Example}
\numberwithin{equation}{section}
\title{Semipositivity theorem for reducible algebraic fiber spaces}
\author{Yujiro Kawamata}
\begin{document}

\maketitle

\begin{abstract}
We shall prove an extension of the semipositivity theorem for the case of 
reducible algebraic fiber spaces.
\end{abstract}

\section{Introduction}

The canonical divisor is an important invariant of an
algebraic variety and reflects its geometric properties
as an algebro-geometric curvature.
When we compare canonical divisors of related varieties in many
different situations, then we always find certain inequalities.
This phenomena is not symmetric and called the positivity of 
the canonical divisor.

This paper is concerned with the positivity of canonical divisors 
for algebraic fiber 
spaces whose fibers are not necessarily irreducible.
The irreducible case was proved and used for the proof of a 
subadditivity theorem of the Kodaira dimension (\cite {CA}) and the 
subadjunction theorem for higher codimensional subvarieies 
(\cite{subadj}).
This kind of positivity theorem is one of the main geometric applications 
of the Hodge theory.
There is a metric version of the positivity theorem by 
Berndtson and Paun (\cite{BP}).

We consider in this paper an extended case where the total space
of an algebraic fiber space is non-normal, and prove the existence of a 
variations of mixed Hodge structures adapted to our situation by 
extending an argument in an article \cite{CA} which considered 
irreducible algebraic fiber spaces.

We fix the terminology.
A reduced equi-dimensional algebraic scheme $X$ is said to be a 
{\em simple normal crossing variety}  
if it has smooth irreducible components and only normal crossing singularities.
A {\em closed stratum} of $X$ is 
an irreducible component of the intersection of some of the irreducible
components of $X$. 
Thus an irreducible component of $X$ is a closed stratum, and
a closed stratum is a smooth variety.

Let $B$ be a reduced Cartier divisor on a simple normal crossing variety $X$.
The pair $(X,B)$ is said to be a {\em simple normal crossing pair} if the
following conditions are satisfied:
\begin{itemize}
\item $B$ does not contain any closed stratum of $X$.

\item Let $Z$ be a closed stratum of $X$.
Then the union $B_Z$ of the intersection $B \cap Z$ and 
all the closed strata properly cotained in $Z$ 
is a simple normal crossing divisor on $Z$.
\end{itemize}

A {\em closed stratum of the pair} $(X,B)$ is either a closed stratum $Z$ 
of $X$ or an irreducible component of the intersection of some of the 
irreducible components of the divisor $B_Z$.

A locally free sheaf $F$ on a complete variety $Y$ is said to be 
{\em numerically semi-positive} 
if the tautological invertible sheaf $\mathcal{O}_{\mathbf{P}(F)}(1)$ on 
the projectivized bundle $\mathbf{P}(F)$ is nef.
It is a weaker condition than the existence of semi-positive metric,
which is still weaker than the condition that the sheaf is generated by
global sections.

We work over $\mathbf{C}$.
We state the main theorem of this paper:

\begin{Thm}\label{main}
Let $X$ be a projective simple normal crossing variety,
$Y$ a smooth projective irreducible variety, 
and $f: X \to Y$ a projective surjective morphism.
Let $B$ be a reduced Cartier divisor on $X$ such that $(X,B)$ is a 
simple normal crossing pair, and 
$C$ a simple normal crossing divisor on $Y$.
Let $q$ be a non-negative integer.
Assume the following conditions:
\begin{itemize}
\item Let $Z$ be a closed stratum of the pair $(X,B)$.
Then the induced morphism $f \vert_Z: Z \to Y$ is surjective, 
has connected fibers, and is smooth over $Y \setminus C$.

\item The local monodromies of the local system
\[
R^{d(Z/Y)+q}(f \vert_{Z^o})_*\mathbf{Z}_{Z^o}
\]
around the branches of $C$ are unipotent, where 
$d(Z/Y) = \dim Z - \dim Y$ denotes the fiber dimension of $f \vert_Z$, 
and $Z^o = Z \setminus (f \vert_Z)^{-1}(C)$.
\end{itemize}
Then the sheaf $R^qf_*\omega_{X/Y}(B)$ is locally free over $Y$ and 
is numerically semipositive, where $\omega_{X/Y}(B)$ is the relative 
canonical sheaf with logarithmic poles along $B$.
\end{Thm}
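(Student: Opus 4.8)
The plan is to reduce the statement to Hodge theory by realizing $R^qf_*\omega_{X/Y}(B)$ as the Deligne canonical extension of the bottom piece of the Hodge filtration of a suitable variation of mixed Hodge structures, and then to propagate the semipositivity known in the irreducible case (\cite{CA}) through the weight filtration. Write $d = \dim X - \dim Y$, $Y^o = Y \setminus C$ and $X^o = f^{-1}(Y^o)$. Over $Y^o$ every closed stratum $Z$ of the pair $(X,B)$ maps smoothly with connected fibres to $Y^o$, so each local system $R^{d(Z/Y)+q}(f\vert_{Z^o})_*\mathbf{C}$ underlies a polarizable variation of pure Hodge structure whose local monodromies around $C$ are unipotent by hypothesis. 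The first task is to assemble these pure variations into a single graded-polarizable admissible variation of mixed Hodge structures $\mathcal{H}$ on $Y^o$, concentrated in weights near $d+q$, whose weight-graded quotients $\mathrm{Gr}^W_k\mathcal{H}$ are, up to Tate twist, the pure variations attached to the strata.

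Second, I would construct $\mathcal{H}$ from the simple normal crossing structure of $(X,B)$. Associated to it there is a cohomological descent, or Mayer--Vietoris, spectral sequence expressing $R^{d+q}f_*$ of the relative logarithmic de Rham complex in terms of the $R^{\bullet}(f\vert_Z)_*$ of the strata, together with the Poincar\'e residue maps along the double loci of $X$ and along $B$. This is the relative, logarithmic analogue of the weight spectral sequence computing the mixed Hodge structure on the cohomology of an open simple normal crossing variety; its degeneration (Deligne, Steenbrink) furnishes the weight filtration $W$ on $\mathcal{H}$ and identifies $\mathrm{Gr}^W$ with the pure pieces above. The essential point is that the relative dualizing sheaf with log poles sits at the bottom of the Hodge filtration: through the residue exact sequences relating $\omega_{X/Y}(B)$ to the sheaves $\omega_{Z/Y}$ of the strata, one identifies the canonical extension over all of $Y$ of the bottom Hodge piece of $\mathcal{H}$ with $R^qf_*\omega_{X/Y}(B)$, and the canonical extension of the bottom Hodge piece of $\mathrm{Gr}^W_k\mathcal{H}$ with $R^q(f\vert_Z)_*\omega_{Z/Y}$ for the corresponding stratum $Z$.

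Third, I would invoke the irreducible-case semipositivity theorem for each graded piece: since each $\mathrm{Gr}^W_k\mathcal{H}$ is a polarizable pure variation with unipotent monodromy arising from the smooth projective fibration $f\vert_Z$, the canonical extension of its bottom Hodge piece, namely $R^q(f\vert_Z)_*\omega_{Z/Y}$, is locally free and numerically semipositive. It then remains to pass from the graded pieces to $\mathcal{H}$ itself. Because the Hodge filtration of an admissible variation of mixed Hodge structures is strict with respect to $W$, and because for unipotent monodromy the Deligne canonical extension is compatible with both filtrations (Kashiwara; Steenbrink--Zucker), the canonical extension of the bottom Hodge piece of $\mathcal{H}$ carries a filtration by subbundles whose successive quotients are the canonical extensions of the bottom Hodge pieces of the $\mathrm{Gr}^W_k\mathcal{H}$. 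Each such quotient is locally free and nef, and an extension of a nef locally free sheaf by a nef locally free sheaf is again nef; by induction on the length of $W$ the sheaf $R^qf_*\omega_{X/Y}(B)$ is locally free and numerically semipositive, as claimed.

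The main obstacle I expect is the second step: constructing the admissible variation of mixed Hodge structures in families and proving that its canonical extension computes the algebraic sheaf $R^qf_*\omega_{X/Y}(B)$. Concretely one must show that the relative logarithmic de Rham complex of $(X,B)$ over $Y$ yields a degenerating Hodge-to-de Rham spectral sequence, and hence a well-defined variation over $Y^o$, extending the cohomological-descent argument of \cite{CA} from irreducible fibres to the non-normal case; and one must control the behaviour near $C$, matching the algebraic canonical extension with the Deligne extension via the nilpotent orbit theorem so that the residue exact sequences globalize compatibly with the weight and Hodge filtrations. The unipotency hypotheses on the local monodromies of all strata are precisely what guarantee the existence of the relative monodromy weight filtrations and the functoriality of the canonical extensions, so this assembly is the technical heart of the argument; the framework of Saito's mixed Hodge modules would give an alternative foundation for the same construction.
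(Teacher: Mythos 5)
Your outline follows essentially the same route as the paper: construct a graded-polarizable variation of mixed Hodge structures on $Y\setminus C$ whose weight-graded quotients are the pure variations attached to the closed strata, identify $R^qf_*\omega_{X/Y}(B)$ with the highest Hodge piece of its canonical extension by means of the residue exact sequence $0\to\omega_{X/Y}(B)\to\omega_{X^{[0]}/Y}(B^{[0]})\to\omega_{X^{[1]}/Y}(B^{[1]})\to\cdots$, apply the semipositivity theorem of \cite{CA} to each graded piece, and conclude because nefness of locally free sheaves is preserved under extensions. One point in your second step needs correcting, and it is precisely where the paper's main technical input lies. The complex you want is \emph{not} the cohomological-descent (Mayer--Vietoris) complex of the strata: that complex has restriction maps as arrows, computes $Rf_*\mathbf{Q}_{X^o\setminus B^o}$ (the Du Bois complex $\bar\Omega^{\bullet}_X(\log B)$), and its top Hodge graded piece is a direct sum over the components rather than $\omega_{X/Y}(B)$. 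The complex actually used has Poincar\'e residues as arrows, $\Omega^{\bullet}_{X^{[0]}}(\log B^{[0]})\to\Omega^{\bullet}_{X^{[1]}}(\log B^{[1]})[-1]\to\cdots$, and --- as the paper stresses --- it does not arise from any locally trivial family of topological spaces, so its rational structure $H_{\mathbf{Q}}$ cannot be obtained by descent; it must be built by hand as a convolution (iterated cones) of the objects $R(i_q)_*\mathbf{Q}_{X^{[q]}\setminus B^{[q]}}[-q]$ with residue-type connecting morphisms, and one must check that such a convolution exists, is essentially unique, and is compatible with the de Rham side (the convolution lemma of \S 3, relativized in \S 4). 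This is exactly the ``technical heart'' you flag but leave unresolved. Two smaller differences: the paper makes graded polarizability concrete by passing to primitive parts with respect to an ample class on $X$ (the highest Hodge parts are unaffected), and rather than appealing to admissibility and strictness in general, it compares three explicit weight spectral sequences (over $Y^o$, their canonical extensions, and the one built from the adjunction sequences for $\omega_{Z/Y}$) and deduces $d_m=\tilde d_m=\bar d_m=0$ for $m\ge 2$ to obtain both the local freeness and the identification $R^qf_*\omega_{X/Y}(B)\cong F^{d(X/Y)}(\tilde{\mathcal{H}}^{q+d(X/Y)})$. Otherwise your strategy matches the paper's.
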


This result was proved if $X$ is irreducible in \cite{Fujino} after 
preceding results by \cite{Fujita} in the case $q=0$, $B = 0$ and 
$\dim Y = 1$, by \cite{CA} in the case $q=0$ and $B = 0$, 
and by \cite{Kollar} and \cite{Nakayama} in the case $B = 0$.

The main result can be extended without change of the proof to the case where 
$X$ is a K\"ahler space, because we use only the fact that the cohomology 
groups of compact K\"ahler manifolds underly Hodge structures.
The polarizations are defined over $\mathbf{R}$ instead of $\mathbf{Q}$, so we
have to consider variations of mixed Hodge $\mathbf{R}$-structures in this 
case.

We use the strategy of \cite{CA} for the proof.
We construct a variation of mixed Hodge structures 
on the open part $Y \setminus C$, 
and prove that the sheaf $R^qf_*\omega_{X/Y}(B)$ coincides with 
the canonical extensions of the graded piece of the highest degree 
with respect to the Hodge filtration.
Then the semipositivity theorem follows from a result in \cite{CA}, which is 
a consequence of Griffiths' result (\cite{Griffiths}) 
on the semi-positivity of the metric
of the Hodge bundles over the open part $Y \setminus C$ together with the 
considerations at the boundary $C$ using results of Schmid (\cite{Schmid}).

The local monodromies are always quasi-unipotent.
Therefore the assumption on the local monodromies in the theorem 
is easily achieved if 
we replace $Y$ by a Kummer covering and change $X$ accordingly.
This is the unipotent reduction theorem in \cite{CA}.
We can also put it that the semipositivity theorem holds for orbibundles on
the orbifold above $Y$.

If the fiber space is irreducible, then we have a weak semistable 
reduction theorem (\cite{AK}), and we can analyse the mixed Hodge structures on
the degenerate fibers more explicitly (\cite{AF}).
But we do not know whether there is an extension of the weak semistable 
reduction theorem to the case of reducible algebraic fiber spaces.

Although our result is stated in a numerical language,
the proof of the semipositivity theorem says something stronger 
than the purely numerical one; namely
the Hodge bundle carries a semipositive singular hermitian metric 
whose singularities have no contributions to the
curvature form.

We note that the underlying local system of the 
variation of mixed Hodge structures does not come from a 
locally trivial family of topological spaces, e.g., fibers of $f$ over 
$Y \setminus C$, but something virtually corresponding to their duals.

The contents of this paper is as follows.
We recall the terminology on the cohomological mixed Hodge complexes and
variations of mixed Hodge structures in \S 2.
We start with the construction of the mixed Hodge structures 
on the general fibers in \S 3, 
and then make it relative over the base in \S 4.
The main result is proved in \S 5.

We thank Valery Alexeev and Christopher Hacon 
whose question at MSRI Berkeley on whether the 
statement of the theorem holds was the starting point of the project.
We also thank MSRI for the excellent working condition.

%%%%%%%%%%%%%%%%%%%%%%%%%%%%%%%%%%%%%%%%%%%%%%%%

\section{Preliminaries}

\subsection{Cohomological mixed Hodge complexes}

We recall the definition of cohomological mixed Hodge complexes in 
\cite{D}.

(1) \cite{D}~8.1.1.
A {\em Hodge $\mathbf{Q}$-complex} of weight $n$ 
\[
H = (H_{\mathbf{Q}}, (H_{\mathbf{C}},F), \alpha)
\]
consists of the following data:

\begin{itemize}
\item a complex of $\mathbf{Q}$-modules $H_{\mathbf{Q}} \in 
\text{Ob }D^+(\mathbf{Q})$ which is bounded from below and such that 
the cohomologies $H^k(H_{\mathbf{Q}})$ are $\mathbf{Q}$-modules 
of finite type for all $k$;

\item a filtered complex $(H_{\mathbf{C}},F) \in 
\text{Ob }D^+F(\mathbf{C})$ which is bounded from below
with an isomorphism 
$\alpha: H_{\mathbf{C}} \cong H_{\mathbf{Q}} \otimes \mathbf{C}$ in 
$D^+(\mathbf{C})$;
\end{itemize}
such that the spectral sequence associated to the filtration $F$ 
\[
E_1^{p,q} = H^{p+q}(\text{Gr}_F^p(H_{\mathbf{C}})) \Rightarrow 
H^{p+q}(H_{\mathbf{C}})
\]
degenerate at $E_1$, and 
\[
(H^k(H_{\mathbf{Q}}), (H^k(H_{\mathbf{C}}),F))
\]
with an isomorphism 
$H^k(\alpha): H^k(H_{\mathbf{C}}) \cong H^k(H_{\mathbf{Q}}) \otimes 
\mathbf{C}$
is a Hodge $\mathbf{Q}$-structure of weight $n+k$.

If $H$ is a Hodge $\mathbf{Q}$-complex of weight $n$, then
$H[m]$ is a Hodge $\mathbf{Q}$-complex of weight $m+n$ (\cite{D}~8.1.4).

(2) \cite{D}~8.1.2.
A {\em cohomological Hodge $\mathbf{Q}$-complex} of weight $n$ 
on a topological space $X$ 
\[
H = (H_{\mathbf{Q}}, (H_{\mathbf{C}},F), \alpha)
\]
consists of the following data:

\begin{itemize}
\item a complex $H_{\mathbf{Q}} \in \text{Ob }D^+(X, \mathbf{Q})$ 
of locally constant sheaves on $X$ which is bounded from below;

\item a filtered complex $(H_{\mathbf{C}},F) \in 
\text{Ob }D^+F(X, \mathbf{C})$ of sheaves on $X$ which is bounded from below
with an isomorphism 
$\alpha: H_{\mathbf{C}} \cong H_{\mathbf{Q}} \otimes \mathbf{C}$ in 
$D^+(X, \mathbf{C})$;
\end{itemize}
such that the triple
\[
(R\Gamma(H_{\mathbf{Q}}), R\Gamma((H_{\mathbf{C}},F)), R\Gamma(\alpha))
\]
is a Hodge $\mathbf{Q}$-complex of weight $n$.

(3) \cite{D}~8.1.5.
A {\em mixed Hodge $\mathbf{Q}$-complex} 
\[
H = ((H_{\mathbf{Q}},W), (H_{\mathbf{C}},W,F), \alpha)
\]
consists of the following data:
\begin{itemize}
\item a filtered complex of $\mathbf{Q}$-modules 
$(H_{\mathbf{Q}},W) \in \text{Ob }D^+F(\mathbf{Q})$
which is bounded from below
with an increasing filtration $W$ such that 
the cohomologies $H^k(H_{\mathbf{Q}})$ are $\mathbf{Q}$-modules of finite type
for all $k$;

\item a bifiltered complex $(H_{\mathbf{C}},W,F) \in 
\text{Ob }D^+F_2(\mathbf{C})$ which is bounded from below with an isomorphism 
$\alpha: (H_{\mathbf{C}},W) \cong (H_{\mathbf{Q}},W) \otimes \mathbf{C}$ in 
$D^+F(\mathbf{C})$;
\end{itemize}
such that 
\[
(\text{Gr}_n^W(H_{\mathbf{Q}}), (\text{Gr}_n^W(H_{\mathbf{C}}),F))
\]
with an isomorphism
\[
\text{Gr}_n^W(\alpha): \text{Gr}_n^W(H_{\mathbf{C}})
\cong \text{Gr}_n^W(H_{\mathbf{Q}}) \otimes \mathbf{C}
\]
is a Hodge $\mathbf{Q}$-complex of weight $n$ for any $n$.

(4) \cite{D}~8.1.6.
A {\em cohomological mixed Hodge $\mathbf{Q}$-complex} 
on a topological space $X$ 
\[
H = ((H_{\mathbf{Q}},W), (H_{\mathbf{C}},W,F), \alpha)
\]
consists of the following data:

\begin{itemize}
\item a filtered complex $(H_{\mathbf{Q}},W) \in \text{Ob }D^+F(X, \mathbf{Q})$
which is bounded from below with an increasing filtration $W$ such that 
the $H^k(X, H_{\mathbf{Q}})$ are $\mathbf{Q}$-modules of finite type;

\item a bifiltered complex $(H_{\mathbf{C}},W,F) \in 
\text{Ob }D^+F_2(X, \mathbf{C})$ which is bounded from below with an isomorphism 
$\alpha: (H_{\mathbf{C}},W) \cong (H_{\mathbf{Q}},W) \otimes \mathbf{C}$ in 
$D^+F(X, \mathbf{C})$;
\end{itemize}
such that 
\[
(\text{Gr}_n^W(H_{\mathbf{Q}}), (\text{Gr}_n^W(H_{\mathbf{C}}),F))
\]
with an isomorphism
\[
\text{Gr}_n^W(\alpha): \text{Gr}_n^W(H_{\mathbf{C}})
\cong \text{Gr}_n^W(H_{\mathbf{Q}}) \otimes \mathbf{C}
\]
is a cohomological Hodge $\mathbf{Q}$-complex of weight $n$ for any $n$.

If $X$ is a proper smooth variety and $B$ is a normal crossing divisor, then
\[
((Ri_*\mathbf{Q}_{X \setminus B}, \tau), 
(\Omega_X^{\bullet}(\log B), W, \sigma))
\]
is a cohomological mixed Hodge $\mathbf{Q}$-complex on $X$, where 
$i: X \setminus B \to X$ is an open immersion, $\tau$ is a canonical 
filtration, $W$ is the filtration with respect to the order of log poles,
and $\sigma$ is a stupid filtration (\cite{D}~8.1.8).

\begin{Prop}\cite{D}~8.1.7.
If $H = ((H_{\mathbf{Q}},W), (H_{\mathbf{C}},W,F))$ is a 
cohomological mixed Hodge $\mathbf{Q}$-complex, then 
$R\Gamma H = (R\Gamma (H_{\mathbf{Q}},W), R\Gamma (H_{\mathbf{C}},W,F))$ is 
a mixed Hodge $\mathbf{Q}$-complex.
\end{Prop}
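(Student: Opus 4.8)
The plan is to verify directly that $R\Gamma H$ satisfies the requirements of the definition of a mixed Hodge $\mathbf{Q}$-complex (8.1.5). The underlying filtered complex $R\Gamma(H_{\mathbf{Q}},W)$ lies in $\text{Ob }D^+F(\mathbf{Q})$, is bounded from below since $H$ is, and its cohomologies $H^k(R\Gamma(H_{\mathbf{Q}},W)) = H^k(X,H_{\mathbf{Q}})$ are $\mathbf{Q}$-modules of finite type by the hypothesis built into the definition of a cohomological mixed Hodge complex (8.1.6); likewise $R\Gamma(H_{\mathbf{C}},W,F)$ lies in $\text{Ob }D^+F_2(\mathbf{C})$. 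The comparison isomorphism is obtained by applying $R\Gamma$ to $\alpha$ and using that $R\Gamma$ commutes with $-\otimes\mathbf{C}$ by flatness, giving $R\Gamma(\alpha)\colon R\Gamma(H_{\mathbf{C}},W)\cong R\Gamma(H_{\mathbf{Q}},W)\otimes\mathbf{C}$ in $D^+F(\mathbf{C})$. All of this is formal; the genuine content lies in the condition on the graded pieces for $W$.

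The heart of the argument is the commutation
\[
\text{Gr}_n^W R\Gamma(H_{\mathbf{Q}},W) \cong R\Gamma(\text{Gr}_n^W H_{\mathbf{Q}})
\]
together with its $\mathbf{C}$-analogue compatible with $F$. First I would fix a functorial exact resolution by $\Gamma$-acyclic sheaves, for instance the canonical Godement resolution. Applied to the filtered complex $(H_{\mathbf{Q}},W)$ it produces a complex of flasque sheaves carrying an induced filtration $W$ whose graded quotients are again flasque and, by exactness and functoriality of the construction, coincide with the resolution of $\text{Gr}_n^W H_{\mathbf{Q}}$. Since a short exact sequence of flasque sheaves remains exact after applying $\Gamma$, the functor $\Gamma$ commutes with passage to $\text{Gr}_n^W$ on this resolution, which yields the displayed isomorphism. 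The same construction applied to the bifiltered complex $(H_{\mathbf{C}},W,F)$ produces the $\mathbf{C}$-side isomorphism respecting the Hodge filtration $F$, since $F$ is carried along by the exact resolution as well.

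With the commutation in hand the conclusion is immediate. By hypothesis the triple $(\text{Gr}_n^W H_{\mathbf{Q}}, (\text{Gr}_n^W H_{\mathbf{C}}, F), \text{Gr}_n^W \alpha)$ is a cohomological Hodge $\mathbf{Q}$-complex of weight $n$, which by definition (8.1.2) means precisely that its image under $R\Gamma$ is a Hodge $\mathbf{Q}$-complex of weight $n$. Transporting this through the commutation isomorphism shows that
\[
(\text{Gr}_n^W R\Gamma(H_{\mathbf{Q}}), (\text{Gr}_n^W R\Gamma(H_{\mathbf{C}}), F))
\]
is a Hodge $\mathbf{Q}$-complex of weight $n$ for every $n$, which is exactly the remaining axiom needed for $R\Gamma H$ to be a mixed Hodge $\mathbf{Q}$-complex.

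The step I expect to be the main obstacle is this commutation of $R\Gamma$ with $\text{Gr}_n^W$, that is, the strictness needed so that a single filtered resolution simultaneously resolves every graded piece. The delicate point is the bifiltered case $(H_{\mathbf{C}},W,F)$, where one must ensure the resolution is compatible with $W$ and $F$ at once; one should also assume $W$ is biregular so that the associated weight spectral sequences converge and the formation of $\text{Gr}_n^W$ in the filtered derived category is well behaved. Once the functorial flasque resolution is seen to be strict for $W$ and to carry $F$, the remaining verifications are routine.
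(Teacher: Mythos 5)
The paper gives no proof of this proposition: it is quoted from Deligne \cite{D}~8.1.7 and used as a black box, so there is nothing in the paper itself to compare your argument against. Your reconstruction is correct and is essentially Deligne's own argument: the only non-formal point is the commutation $\text{Gr}_n^W R\Gamma \cong R\Gamma\,\text{Gr}_n^W$ (compatibly with $F$ on the $\mathbf{C}$-side), and your route via a functorial exact $\Gamma$-acyclic (Godement) resolution is the standard way to get it --- exactness of the resolution functor makes the induced filtered and bifiltered pieces flasque resolutions of the corresponding pieces of $H$, and exactness of $\Gamma$ on short exact sequences of flasque sheaves then yields the isomorphism; in Deligne's setup this is in effect built into the definition of $R\Gamma$ on the filtered derived category. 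Your caveats (biregularity of $W$, simultaneous compatibility of the resolution with $W$ and $F$) are the right ones and hold in this setting, and the final transport of the weight-$n$ condition through the commutation isomorphism, using that ``cohomological Hodge complex of weight $n$'' means by definition that $R\Gamma$ of it is a Hodge complex of weight $n$, closes the argument as you describe.
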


\begin{Thm}\label{degenerate}\cite{D}~8.1.9.
Let $H = ((H_{\mathbf{Q}},W), (H_{\mathbf{C}},W,F), \alpha)$ be a 
mixed Hodge $\mathbf{Q}$-complex.
Then the following hold:

(1) $H^n(H_{\mathbf{Q}}) = ((H^n(H_{\mathbf{Q}}),W[n]), 
(H^n(H_{\mathbf{C}}),W[n],F), H^n(\alpha))$ 
is a mixed Hodge $\mathbf{Q}$-structure.

(2) The spectral sequence defined by the weight filtration
\[
{}_WE_1^{p,q}= H^{p+q}(\text{Gr}_{-p}^W(H_{\mathbf{Q}})) \Rightarrow 
H^{p+q}(H_{\mathbf{Q}})
\]
degenerates at $E_2$.

(3) The spectral sequence defined by the Hodge filtration
\[
{}_FE_1^{p,q}= H^{p+q}(\text{Gr}^p_F(H_{\mathbf{C}})) \Rightarrow 
H^{p+q}(H_{\mathbf{C}})
\]
degenerates at $E_1$.
\end{Thm}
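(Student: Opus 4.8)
The plan is to run the entire argument through the weight spectral sequence and to exploit the single structural fact that a morphism of pure Hodge structures is strictly compatible with the Hodge filtration, and in particular that there is no nonzero morphism between pure Hodge structures of distinct weights. First I would record that for each $m$ the graded complex $\text{Gr}_m^W(H_{\mathbf{C}})$, together with the induced filtration $F$, is a Hodge $\mathbf{Q}$-complex of weight $m$; hence its cohomology $H^k(\text{Gr}_m^W(H_{\mathbf{C}}))$ carries a pure Hodge structure of weight $m+k$. Applying this with $m=-p$ and $k=p+q$ shows that the term
\[
{}_WE_1^{p,q} = H^{p+q}(\text{Gr}_{-p}^W(H))
\]
of the weight spectral sequence carries a pure Hodge structure of weight $q$.

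Next I would prove assertion (2). The differential $d_1\colon {}_WE_1^{p,q}\to {}_WE_1^{p+1,q}$ is the connecting map of the long exact cohomology sequence attached to the short exact sequence $0\to \text{Gr}_{-p-1}^W \to W_{-p}/W_{-p-2} \to \text{Gr}_{-p}^W \to 0$, so it is a morphism of mixed Hodge complexes and therefore a morphism of the two weight-$q$ Hodge structures on source and target. Consequently its kernel and cokernel inherit pure Hodge structures of weight $q$, i.e. ${}_WE_2^{p,q}$ is pure of weight $q$. For $r\ge 2$ the differential
\[
d_r\colon {}_WE_r^{p,q}\to {}_WE_r^{p+r,\,q-r+1}
\]
would be a morphism between pure Hodge structures of weights $q$ and $q-r+1$; since these weights differ, $d_r=0$. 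This is exactly the $E_2$-degeneration of the weight spectral sequence. As a by-product the successive quotients ${}_WE_\infty^{p,q}={}_WE_2^{p,q}$ equip $\text{Gr}^{W[n]}H^n(H)$ with pure Hodge structures of the correct weight, which is the backbone of (1).

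It remains to upgrade this to the full assertions (1) and (3): one must check that the filtration $F$ induced on $H^n(H_{\mathbf{C}})$ induces on each graded piece $\text{Gr}_q^{W[n]}H^n$ precisely the Hodge filtration of the pure structure found above, and that the Hodge spectral sequence
\[
{}_FE_1^{p,q} = H^{p+q}(\text{Gr}_F^p(H_{\mathbf{C}})) \Rightarrow H^{p+q}(H_{\mathbf{C}})
\]
degenerates at $E_1$. For this I would invoke Deligne's theory of two filtrations: the decisive point is that the differential $d$ of $H_{\mathbf{C}}$ is strictly compatible with $F$. This is established by induction on the length of the weight filtration, the base case being a single nonzero graded piece, where $H$ is a Hodge complex up to shift and strictness of $d$ with respect to $F$ is exactly the built-in $E_1$-degeneration of a Hodge complex. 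In the inductive step one compares the filtrations that $F$ induces on the weight spectral sequence — the direct, the recursive, and the inverse-image filtration — and shows that they coincide, which simultaneously forces strictness of $d$ and the $E_1$-degeneration of ${}_FE$.

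The main obstacle is precisely this two-filtration bookkeeping. The weight degeneration in (2) is soft, following formally from the vanishing of cross-weight morphisms; but the $E_1$-degeneration for $F$ and the compatibility claim in (1) are not formal, because the two filtrations $W$ and $F$ interact and the a priori distinct filtrations that $F$ induces on the weight spectral sequence need not agree without the recursive strictness argument. Controlling this interaction — proving the coincidence of the induced filtrations and the strict compatibility of $d$ with $F$ across the whole weight filtration — is where the real work lies, and it is the part I expect to require the careful inductive treatment rather than a one-line Hodge-theoretic observation.
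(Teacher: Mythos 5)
The paper gives no proof of this theorem --- it is quoted verbatim from Deligne \cite{D}~8.1.9 --- so there is nothing internal to compare against, and your outline is essentially Deligne's own argument: purity of the weight-graded pieces, vanishing of $d_r$ for $r\ge 2$ by the absence of nonzero morphisms between pure Hodge structures of different weights, and the two-filtrations lemma to obtain strictness of $d$ with respect to $F$, the coincidence of the direct, recursive and inverse-image filtrations, and the $E_1$-degeneration in (3). The one caveat is that the $E_2$-degeneration in (2) is not quite as ``soft'' as you first present it: for $r\ge 2$ one must already know that the filtration induced by $F$ on ${}_WE_r^{p,q}$ is the Hodge filtration of a weight-$q$ structure and that $d_r$ respects it before the cross-weight vanishing can be invoked, so (2) is proved inside the same inductive two-filtration argument you reserve for (1) and (3) --- a point your closing paragraph essentially concedes, so I regard this as an issue of ordering rather than a gap.
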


%%%%%%%%%%%%%%%%%%%%%%%%%%%%%%%%%%%%%%%%%%%%%%%%

\subsection{Variations of mixed Hodge structures}

A {\em variation of mixed Hodge $\mathbf{Q}$-structures} 
on a complex manifold $Y$
\[
H = ((H_{\mathbf{Q}},W), (\mathcal{H},W,F), \alpha)
\]
consists of the following data:

\begin{itemize}
\item a filtered locally constant sheaf of  
$\mathbf{Q}$-modules of finite rank
$(H_{\mathbf{Q}},W) \in \text{Ob }D^+(Y,\mathbf{Q})$
with an increasing filtration $W$ by locally constant sheaves; 

\item a bifiltered locally free sheaf of coherent 
$\mathcal{O}_Y$-modules
$(\mathcal{H},W,F) \in \text{Ob }D^+F_2(X,\mathcal{O}_Y)$ by 
an increasing filtration $W$ by locally free sheaves
with an isomorphism 
$\alpha: (\mathcal{H},W) \cong (H_{\mathbf{Q}},W) \otimes \mathcal{O}_Y$ in 
$D^+F(X,\mathcal{O}_Y)$, 
and a decreasing filtration $F$ by locally free sheaves 
\end{itemize}
such that the natural connection $\nabla$ on $\mathcal{H}$ induced 
from $\alpha$ satisfies
\[
\nabla: F^p\mathcal{H} \to F^{p-1}\mathcal{H} \otimes \Omega_Y^1
\]
and, for each point $y \in Y$, 
$H_y = ((H_{\mathbf{Q},y},W), (\mathcal{H} \otimes \mathcal{O}_y, W,F), 
\alpha)$ is a
mixed Hodge $\mathbf{Q}$-structure.

A variation of mixed Hodge $\mathbf{Q}$-structures $H$ 
is said to be {\em graded polarizable} if $\text{Gr}^W_q(H)$ are
polarized variations of Hodge $\mathbf{Q}$-structures 
for all $q$ (\cite{SZ}).

Let $H = (H_{\mathbf{Q}}, (\mathcal{H},F))$ be a polarized variation 
of (pure) Hodge $\mathbf{Q}$-structures. 
By \cite{Griffiths}~Theorem~5.2, if $d$ is the largest number 
such that $F^d\mathcal{H} \ne 0$, then the 
curvature form $\Theta$ of the Hodge bundle $F^d\mathcal{H}$ 
is given by the formula
\[
(\Theta e, e') = (\sigma(e), \sigma(e'))
\]
where $\sigma: F^d\mathcal{H} \to F^{d-1}\mathcal{H}/F^d\mathcal{H} 
\otimes \Omega^1_Y$ 
is induced from the connection $\nabla$ of the flat bundle $\mathcal{H}$ and 
the inner product is given by the polarization. 
In particular, the curvature $\Theta$ is semipositive differential form of 
type $(1,1)$.

Let $H$ be a variation of mixed Hodge $\mathbf{Q}$-structures over $Y$, and 
assume that $Y$ is embedded into a complex manifold $\tilde Y$
such that the complement $C = \tilde Y \setminus Y$ is a
normal crossing divisor.
Assuming that the local monodromies of the local system $H_{\mathbf{Q}}$ 
around the branches of $C$ are unipotent, 
we define a {\em canonical extension} of $H$ as follows.
Let $y \in C$ be an arbitrary point at the boundary, 
let $N_i$ be the logarithm of the local monodromies around 
the branches of $C$ around $y$,
and let $t_i$ be the local coordinates defining the branches.
Then the canonical extension $\tilde{\mathcal{H}}$ is 
defined as a locally free sheaf on $\tilde Y$ 
which is generated by sections of the form 
$\tilde v= \text{exp}(-\sum_i t_iN_i/2\pi \sqrt{-1})(v)$ near $y$, 
where the $v$ are multi-valued flat sections of $H_{\mathbf{Q}}$.
We note that the monodromy actions are canceled and the $\tilde v$
are single-valued holomorphic sections of $\mathcal{H}$ outside the 
boundary divisors.

The Hodge filtration $F$ of $\mathcal{H}$ extends
to $\tilde{\mathcal{H}}$ such that $\text{Gr}_F(\tilde{\mathcal{H}})$ is still
a locally free sheaf on $\tilde Y$.
Indeed this is a consequence of the nilpotent orbit theorem (\cite{Schmid}) 
when $H$ is a variation of pure Hodge structures, 
and the general case follows immediately from this.

%%%%%%%%%%%%%%%%%%%%%%%%%%%%%%%%%%%%%%%%%%%%%%%%%%%%%%

\section{Absolute case}

In this subsection we do the calculation on the fibers of $f$ 
over $Y \setminus C$.
In other words, we consider the case where $Y$ is a point.
The purpose is also to explain the idea of the proof without too much notation.

Let $(X,B)$ be a projective simple normal crossing pair.
We denote by $X^{[q]}$ the disjoint union of all the
closed strata of $X$ which are of codimension $q$.
For example, $X^{[0]}$ is the normalization of $X$, 
$X^{[1]}$ is the normalization of the double locus of $X$,
and so on.
We denote by $B^{[q]}$ the disjoint union of the $B_Z$
for all the irreducible components $Z$ of $X^{[q]}$.
It is a simple normal crossing divisor on $X^{[q]}$.
Let $i_q: X^{[q]} \setminus B^{[q]} \to X^{[q]}$ 
be the open immersions. 

In the following, we identify sheaves with their direct 
images under finite morphisms by the abuse of notation.

First we define the {\em logarithmic De Rham complex} 
$\tilde{\Omega}^{\bullet}_X(\log B)$ for the pair $(X,B)$
as the first term of the 
following exact sequence given by the residue homomorphisms
\[
\begin{split}
&0 \to \tilde{\Omega}^{\bullet}_X(\log B)
\to \Omega^{\bullet}_{X^{[0]}}(\log B^{[0]})
\to \Omega^{\bullet}_{X^{[1]}}(\log B^{[1]})[-1] \\
&\to \Omega^{\bullet}_{X^{[2]}}(\log B^{[2]})[-2]
\to \dots
\end{split}
\]
For example
\[
\tilde{\Omega}^{\dim X}_X(\log B) \cong \omega_X(B)
\]
is an invertible sheaf, but 
\[
\tilde{\Omega}^0_X(\log B) \cong \mathcal{O}_{X^{[0]}}
\]
is the structure sheaf of the normalization of $X$. 

If $X$ is embedded into a 
smooth variety $V$ as a simple normal crossing divisor, and 
if $B$ coincides with a restriction $B = W \cap X$ of another 
simple normal crossing 
divisor $W$ on $V$ such that $X \cup W$ is also a simple normal crossing 
divisor, then there is an exact sequence given by the residue homomorphism
\[
0 \to \Omega^{\bullet}_V(\log W) \to \Omega^{\bullet}_V(\log (X+W)) 
\to \tilde{\Omega}^{\bullet}_X(\log B)[-1] \to 0.
\]

Our complex is differnt from the complex $\bar{\Omega}_X^{\bullet}(\log B)$
considered by Du Bois \cite{DB}, which is a single complex 
associated to the following double complex coming from the Mayor-Vietoris 
exact sequence:
\[
\bar{\Omega}_X^{\bullet}(\log B) = 
\{ \Omega^{\bullet}_{X^{[0]}}(\log B^{[0]}) \to
\Omega^{\bullet}_{X^{[1]}}(\log B^{[1]}) 
\to \Omega^{\bullet}_{X^{[2]}}(\log B^{[2]})
\to \dots\}
\]
where the arrows are the restriction homomorphisms.
We note that both these complexes are different from the complex of K\"ahler 
differentials.
We have $\bar{\Omega}_X^0 \cong \mathcal{O}_X$, and 
\[
Ri_*\mathbf{C}_{X \setminus B} \cong \bar{\Omega}_X^{\bullet}(\log B)
\]
for the open immersion $i: X \setminus B \to X$.

On the other hand, there is no obvious topological 
space which corresponds to the complex
$\tilde{\Omega}^{\bullet}_X(\log B)$ in a similar way as in the case of 
Du Bois complex.
In order to define an object on the $\mathbf{Q}$-level,
we consider closed strata $Z$ and $Z'$ of $X$ such that $Z'$ is contained in 
$Z$ as a prime divisor.
Let $B'_Z$ be the union of all the irreducible components of $B_Z$ 
except $Z'$. 
Then we have $B_{Z'}=B'_Z \cap Z'$.
Let $i_Z: Z \setminus B_Z \to Z$ and $i_{Z'}: Z' \setminus B_{Z'} \to Z'$ be 
open immersions.
We replace the residue homomorphism by the composition of 
the following morphisms between objects on $X$:
\[
R(i_Z)_*\mathbf{Q}_{Z \setminus B_Z}
\to R(i_{Z'})_*R\Gamma_{Z' \setminus B_{Z'}}
(\mathbf{Q}_{Z \setminus B'_Z})[1] 
\cong R(i_{Z'})_*\mathbf{Q}_{Z' \setminus B_{Z'}}[-1].
\]
By taking the alternating sum, we obtain 
\[
R(i_q)_*\mathbf{Q}_{X^{[q]} \setminus B^{[q]}} 
\to R(i_{q+1})_*\mathbf{Q}_{X^{[q+1]} \setminus B^{[q+1]}}[-1]
\]
We have the following substitute of the Mayer-Vietoris exact sequence:

\begin{Lem}
There exists an object $H_{\mathbf{Q}}$ on $X$
as a convolution of the following complex:
\[
R(i_0)_*\mathbf{Q}_{X^{[0]} \setminus B^{[0]}}
\to R(i_1)_*\mathbf{Q}_{X^{[1]} \setminus B^{[1]}}[-1]
\to R(i_2)_*\mathbf{Q}_{X^{[2]} \setminus B^{[2]}}[-2]
\to \dots
\]
where the direct images of objects by closed immersions to $X$ 
are denoted by the same symbols, such that 
\[
H_{\mathbf{Q}} \otimes \mathbf{C}
\cong \tilde{\Omega}^{\bullet}_X(\log B).
\]
\end{Lem}

\begin{proof}
The convolution of a complex of objects may not exist nor be unique in general.
In our case we have isomorphisms
\[
\alpha_q: R(i_q)_*\mathbf{Q}_{X^{[q]} \setminus B^{[q]}} \otimes \mathbf{C}
\cong \Omega^{\bullet}_{X^{[q]}}(\log B^{[q]}).
\]
which are compatible with the boundary morphisms of the both 
complexes of objects.

We execute the convolution process from the right.
Let 
\[
H_0 \to H_1 \to \dots \to H_{k-1} \to H_k
\]
be a complex at an intermediate step, then a complex of the 
next step is constructed in the following way.
If 
\[
H' \to H_{k-1} \to H_k \to H'[1]
\]
is a distinguished triangle, then the boundary morphism 
$H_{k-2} \to H_{k-1}$ is liftable to a morphism $H_{k-2} \to H'$ because 
its composition with the boundary morphism $H_{k-1} \to H_k$ vanishes.
If the composition $H_{k-3} \to H_{k-2} \to H'$ is zero, then we obtain a
complex of the next step with shorter length.

If we tensorize everything in the above procedure with $\mathbf{C}$, 
then the corresponding complex at the intermediate step is given by 
\[
\begin{split}
&\Omega^{\bullet}_{X^{[0]}}(\log B^{[0]})
\to \Omega^{\bullet}_{X^{[1]}}(\log B^{[1]})[-1] 
\to \dots \to \Omega^{\bullet}_{X^{[k-1]}}(\log B^{[k-1]})[-k+1] \\ 
&\to \text{Ker}\{\Omega^{\bullet}_{X^{[k]}}(\log B^{[k]})[-k] 
\to \Omega^{\bullet}_{X^{[k+1]}}(\log B^{[k+1]})[-k-1]\}. 
\end{split}
\]
Then $H_{k-2} \otimes \mathbf{C}$ and $H' \otimes \mathbf{C}$ are 
respectively isomorphic to 
\[
\begin{split}
&\Omega^{\bullet}_{X^{[k-2]}}(\log B^{[k-2]})[-k+2] \\
&\text{Ker}\{\Omega^{\bullet}_{X^{[k-1]}}(\log B^{[k-1]})[-k+1] 
\to \Omega^{\bullet}_{X^{[k]}}(\log B^{[k]})[-k]\}.
\end{split}
\]
Therefore the lifted morphism 
$H_{k-2} \otimes \mathbf{C} \to H' \otimes \mathbf{C}$
is uniquely determined by the compatibility with the given boundary morphism
$\Omega^{\bullet}_{X^{[k-2]}}(\log B^{[k-2]})[-k+2]
\to \Omega^{\bullet}_{X^{[k-1]}}(\log B^{[k-1]})[-k+1]$. 
It follows that the lifted morphism 
$H_{k-2} \to H'$ is also unique and gives a shortened complex of the next step
yielding the desired convolution.
\end{proof}

For example, if $X$ is embedded into a 
smooth variety $V$ with a simple normal crossing divisor $X \cup W$ such that 
$B = X \cap W$, then we have
\[
H_{\mathbf{Q}} \cong R\Gamma_XR(i_V)_*\mathbf{Q}_{V \setminus (X \cup W)}[2]
\]
where $i_V: V \setminus (X \cup W) \to V$ is the open immersion.

We define a {\em weight filtration}, an increasing filtration denoted by $W$,
on the De Rham complex
$\tilde{\Omega}^{\bullet}_X(\log B)$ by the following exact sequence
\[
\begin{split}
&0 \to W_q(\tilde{\Omega}^{\bullet}_X(\log B))
\to W_q(\Omega^{\bullet}_{X^{[0]}}(\log B^{[0]})) \\
&\to W_{q-1}(\Omega^{\bullet}_{X^{[1]}}(\log B^{[1]})[-1]) 
\to W_{q-2}(\Omega^{\bullet}_{X^{[2]}}(\log B^{[2]})[-2])
\to \dots 
\end{split}
\]
where the $W$'s from the second terms denote 
the filtration with respect to the order of log poles.
We define a {\em Hodge filtration}, a decreasing filtration denoted by $F$, by
\[
F^p(\tilde{\Omega}^{\bullet}_X(\log B))
= \tilde{\Omega}^{\ge p}_X(\log B).
\]

\begin{Lem}\label{Gr}
\[
\begin{split}
&\text{Gr}_q^W(\tilde{\Omega}^{\bullet}_X(\log B))
\cong \bigoplus_{dim X - \dim Z = q} \Omega_Z^{\bullet}[-q] \\
&F^r(\text{Gr}_q^W(\tilde{\Omega}^{\bullet}_X(\log B)))
\cong \bigoplus_{dim X - \dim Z = q} \Omega_Z^{\ge r-q}[-q]
\end{split}
\]
where the $Z$ run all the 
closed strata of the pair $(X,B)$ of codimension $q$.
\end{Lem}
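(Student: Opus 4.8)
The plan is to reduce the computation to the classical residue calculus on each smooth stratum and then resolve the remaining combinatorics by a simplicial acyclicity argument. First I would observe that the weight filtration $W$ was defined precisely so that $W_q\tilde{\Omega}^{\bullet}_X(\log B)$ is the convolution of the truncated complex
\[
W_q\Omega^{\bullet}_{X^{[0]}}(\log B^{[0]}) \to W_{q-1}\Omega^{\bullet}_{X^{[1]}}(\log B^{[1]})[-1] \to W_{q-2}\Omega^{\bullet}_{X^{[2]}}(\log B^{[2]})[-2] \to \cdots,
\]
and that, since on each $X^{[p]}$ the inclusion $W_{q-p-1}\hookrightarrow W_{q-p}$ is a split inclusion of locally free sheaves with locally free quotient, passing to the quotient is exact and commutes with forming the convolution. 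Hence $\text{Gr}_q^W(\tilde{\Omega}^{\bullet}_X(\log B))$ is the convolution of the termwise graded complex
\[
\text{Gr}_q^W\Omega^{\bullet}_{X^{[0]}}(\log B^{[0]}) \to \text{Gr}_{q-1}^W\Omega^{\bullet}_{X^{[1]}}(\log B^{[1]})[-1] \to \text{Gr}_{q-2}^W\Omega^{\bullet}_{X^{[2]}}(\log B^{[2]})[-2] \to \cdots.
\]

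Next I would insert Deligne's residue isomorphism on each smooth $X^{[p]}$ with its normal crossing divisor $B^{[p]}$, namely $\text{Gr}_m^W\Omega^{\bullet}_{X^{[p]}}(\log B^{[p]}) \cong \bigoplus_S\Omega^{\bullet}_S[-m]$, the sum running over the components $S$ of the $m$-fold intersection locus of $B^{[p]}$, under which the stupid filtration becomes $F^r \cong \bigoplus_S \Omega^{\ge r-m}_S[-m]$. Applying this with $m=q-p$ and remembering the positional shift $[-p]$, the $p$-th term becomes $\bigoplus_S\Omega^{\bullet}_S[-q]$. For the Hodge filtration, shifting a complex by $[-p]$ shifts the stupid filtration index by $p$, so the filtration induced on the $p$-th term from $F^r\tilde{\Omega}=\tilde{\Omega}^{\ge r}$ is $F^{r-p}$ of $\Omega^{\bullet}_{X^{[p]}}(\log B^{[p]})$; combined with the residue drop by $m=q-p$ this gives $\Omega^{\ge(r-p)-(q-p)}_S[-q]=\Omega^{\ge r-q}_S[-q]$, independent of $p$. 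Here each such $S$ is a codimension-$q$ closed stratum of the pair, reached inside the codimension-$p$ stratum of $X$ carrying it; because $B_Z$ by definition also records the deeper strata of $X$ contained in $Z$, every codimension-$q$ stratum of the pair occurs once for each stratum of $X$ of which it is a $B$-intersection.

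The heart of the argument is the combinatorial collapse. Grouping all the terms above according to the stratum $Z$ they compute, I would note that the convolution differentials are residues taken only along the components of $X$, that is along the passage $X^{[p]}\to X^{[p+1]}$, never along $B$; consequently they preserve $Z$, and a residue along a component of $X$ not passing through $Z$ vanishes, so distinct strata are never mixed. For a fixed $Z$ the copies are indexed by the nonempty subsets $I$ of the set $S(Z)$ of components of $X$ cutting out $Z$, placed in degree $p=|I|-1$, with differential the alternating ``adjoin a vertex of $S(Z)\setminus I$'' map. This is the augmented simplicial cochain complex of the full simplex on the vertex set $S(Z)$, which is acyclic since the simplex is contractible; after deleting the empty face the truncated complex has cohomology a single copy of $\Omega^{\bullet}_Z[-q]$ in the initial degree $p=0$ and nothing else. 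Since the differentials do not mix strata, the convolution splits as the direct sum over $Z$ of these subcomplexes, yielding both displayed isomorphisms simultaneously, the Hodge statement following from the uniform $\Omega^{\ge r-q}_Z[-q]$ computed above. The step I expect to be the main obstacle is precisely this bookkeeping: checking that the residue differentials restrict to exactly the simplicial maps with consistent signs, that the vanishing of residues along components of $X$ missing $Z$ genuinely splits off each stratum, and that the enumeration of closed strata of the pair matches the faces of these simplices without overcounting.
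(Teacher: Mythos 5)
Your proof is correct and follows essentially the same route as the paper: termwise application of the residue isomorphism on each smooth $X^{[p]}$ (with the same bookkeeping of the shifts for the Hodge filtration), followed by a combinatorial cancellation over the set of components of $X$ containing a fixed stratum $Z$. The paper phrases that last step as the multiplicity count that $Z$ appears $\binom{p+1}{p'+1}$ times in $X^{[p'],[q-p']}$ so that ``the contributions cancel except once,'' which is exactly the face count of your simplex on $S(Z)$; your identification of the complex with the acyclic simplicial cochain complex of that simplex makes the exactness explicit rather than merely the alternating sum, but it is the same argument.
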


\begin{proof}
Let $X^{[p],[q]}$ be the disjoint union of all the closed strata of the pair 
$(X,B)$ which are contained in $X^{[p]}$ and has codimension $q$ 
in $X^{[p]}$.
Then the residue homomorphism gives an isomorphism
\[
\text{Gr}_{q-p}^W(\Omega_{X^{[p]}}^{\bullet}(\log B^{[p]})) 
\cong \Omega_{X^{[p],[q-p]}}^{\bullet}[-q+p].
\]
Thus we obtain exact sequences
\[
0 \to \text{Gr}_q^W(\tilde{\Omega}^{\bullet}_X(\log B))[q] 
\to \Omega_{X^{[0],[q]}}^{\bullet}
\to \Omega_{X^{[1],[q-1]}}^{\bullet}
\to \Omega_{X^{[2],[q-2]}}^{\bullet}
\to \dots 
\]
Let $Z$ be an irreducible component of $X^{[p],[q-p]}$ 
but not of $X^{[p+1],[q-p-1]}$ for some $p \le q$.
Then $Z$ appears $\binom{p+1}{p'+1}$ times in $X^{[p'],[q-p']}$ for 
$0 \le p' \le p$.
Therefore the contributions of $Z$ cancel except once.
\end{proof}

We define a {\em weight filtration} $W_q(H_{\mathbf{Q}})$ on the 
$\mathbf{Q}$-level object $H_{\mathbf{Q}}$
as a convolution of the following complex
\[
\begin{split}
&\tau_{\le q}(R(i_0)_*\mathbf{Q}_{X^{[0]} \setminus B^{[0]}})
\to \tau_{\le q}(R(i_1)_*\mathbf{Q}_{X^{[1]} \setminus B^{[1]}}[-1]) \\
&\to \tau_{\le q}(R(i_2)_*\mathbf{Q}_{X^{[2]} \setminus B^{[2]}}[-2])
\to \dots
\end{split}
\]
where $\tau$ denotes the canonical filtration
such that there is an isomorphism
\[
W_q(H_{\mathbf{Q}}) \otimes \mathbf{C} \cong 
W_q(\tilde{\Omega}^{\bullet}_X(\log B)).
\]
The existence of such a convolution is guaranteed by the same reason as before.
Here we note that 
\[
\tau_{\le q}(R(i_p)_*\mathbf{Q}_{X^{[p]} \setminus B^{[p]}}[-p])
= \tau_{\le q-p}(R(i_p)_*\mathbf{Q}_{X^{[p]} \setminus B^{[p]}})[-p].
\]

The \lq\lq filtration'' of the object $H_{\mathbf{Q}}$ means the following: 
$H_{\mathbf{Q}}$ is a convolution of a complex
\[
\to \text{Gr}^W_{q+1}(H_{\mathbf{Q}})[-q-1] \to 
\text{Gr}^W_q(H_{\mathbf{Q}})[-q] 
\to \text{Gr}^W_{q-1}(H_{\mathbf{Q}})[-q+1] \to \dots
\]
where the objects $W_q(H_{\mathbf{Q}})$ and 
$\text{Gr}^W_q(H_{\mathbf{Q}})$ appear in the intermediate steps of the
convolution process in the following distinguished triangles
\[
W_{q-1}(H_{\mathbf{Q}}) \to W_q(H_{\mathbf{Q}}) 
\to \text{Gr}^W_q(H_{\mathbf{Q}}) \to W_{q-1}(H_{\mathbf{Q}})[1].
\]
By the same argument as before, we obtain 
\[
\text{Gr}_q^W(H_{\mathbf{Q}})
\cong \bigoplus_{dim X - \dim Z = q} \mathbf{Q}_Z[-q].
\]
Hence we have the following:

\begin{Thm}
\[
((H_{\mathbf{Q}}, W), 
(\tilde{\Omega}^{\bullet}_X(\log B), W, F))
\]
is a cohomological mixed Hodge $\mathbf{Q}$-complex on $X$.
\end{Thm}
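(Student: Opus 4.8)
The plan is to verify directly the four requirements in Deligne's definition of a cohomological mixed Hodge $\mathbf{Q}$-complex (\cite{D}~8.1.6). The filtered complex $(H_{\mathbf{Q}},W)$ lies in $\text{Ob }D^+F(X,\mathbf{Q})$ and is bounded below, being assembled by convolution from the finitely many bounded objects $R(i_p)_*\mathbf{Q}_{X^{[p]}\setminus B^{[p]}}[-p]$; since every closed stratum is projective and every $X^{[p]}\setminus B^{[p]}$ is a smooth quasi-projective variety, the groups $H^k(X,H_{\mathbf{Q}})$ are of finite type. The bifiltered complex $(\tilde{\Omega}^{\bullet}_X(\log B),W,F)$ and the filtered comparison isomorphism $\alpha\colon (H_{\mathbf{C}},W)\cong (H_{\mathbf{Q}},W)\otimes\mathbf{C}$ are already supplied by the convolution construction of the preceding Lemma together with the isomorphisms $\alpha_p$, and by the weight-filtered convolutions $W_q(H_{\mathbf{Q}})$, whose existence was established there by the same liftability-and-uniqueness argument. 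Thus requirements on the underlying objects and on $\alpha$ are bookkeeping, and the entire content of the theorem is concentrated in the last requirement: that for every $n$ the triple $(\text{Gr}_n^W(H_{\mathbf{Q}}),(\text{Gr}_n^W(\tilde{\Omega}^{\bullet}_X(\log B)),F),\text{Gr}_n^W(\alpha))$ be a cohomological Hodge $\mathbf{Q}$-complex of weight $n$.

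To settle this I would invoke the two computations of the graded pieces already in hand. On the $\mathbf{C}$-level, Lemma~\ref{Gr} gives $\text{Gr}_q^W(\tilde{\Omega}^{\bullet}_X(\log B))\cong\bigoplus_Z\Omega_Z^{\bullet}[-q]$ with $F^r$ equal to $\bigoplus_Z\Omega_Z^{\ge r-q}[-q]$, the sum running over the closed strata $Z$ of codimension $q$; on the $\mathbf{Q}$-level the displayed formula gives $\text{Gr}_q^W(H_{\mathbf{Q}})\cong\bigoplus_Z\mathbf{Q}_Z(-q)[-q]$ over the same index set, where the Tate twist $(-q)$ is the one carried implicitly by the $(2\pi\sqrt{-1})$-normalization of the residue maps and is precisely what the shift by $q$ in $\Omega_Z^{\ge r-q}$ records. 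Each such $Z$ is smooth and projective, so by classical Hodge theory, i.e. the case $B=\emptyset$ of \cite{D}~8.1.8, the pair $(\mathbf{Q}_Z,(\Omega_Z^{\bullet},\sigma))$ is a cohomological Hodge $\mathbf{Q}$-complex of weight $0$ on $Z$, and it transfers unchanged to $X$ along the finite closed immersion $Z\hookrightarrow X$, since pushforward along a closed immersion is exact and preserves $R\Gamma$.

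It then remains to track weights. By the shift rule \cite{D}~8.1.4 the object $\mathbf{Q}_Z[-q]$ has weight $-q$; the Tate twist $(-q)$ raises the weight by $2q$ and shifts $F$ by $q$, so that $\mathbf{Q}_Z(-q)[-q]$ together with the Hodge filtration $\Omega_Z^{\ge r-q}[-q]$ is a cohomological Hodge $\mathbf{Q}$-complex of weight $-q+2q=q$. A finite direct sum of cohomological Hodge complexes of weight $q$ is again one of weight $q$, so $\text{Gr}_q^W$ acquires the required structure and the last requirement holds with $n=q$, completing the verification.

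The step I expect to be the genuine obstacle is the compatibility, under $\text{Gr}_q^W(\alpha)$, of the $\mathbf{Q}$-structure $\bigoplus_Z\mathbf{Q}_Z(-q)[-q]$ with the de Rham description $\bigoplus_Z(\Omega_Z^{\bullet}[-q],F)$: one must check that the residue maps defining $W$ on the $\mathbf{Q}$-level and on the $\mathbf{C}$-level correspond under $\alpha$ with the correct $(2\pi\sqrt{-1})^{-q}$ factors, so that the identification realizes a \emph{pure} Hodge structure of weight exactly $q$ rather than a mere complex isomorphism. This is the only place where Hodge-theoretic input beyond the formal convolution machinery enters; once the normalizations are pinned down, everything else is the weight bookkeeping above.
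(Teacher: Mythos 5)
Your proposal is correct and follows essentially the same route as the paper: both reduce, via the graded-piece computations of Lemma~\ref{Gr} and its $\mathbf{Q}$-level analogue, to the observation that $\text{Gr}_q^W$ is a direct sum over the codimension-$q$ strata $Z$ of the classical Hodge complex $(\mathbf{Q}_Z,\Omega_Z^{\bullet})$ with the Hodge filtration shifted by $q$ and the complex shifted by $[-q]$, hence of weight $2q-q=q$. The normalization issue you flag at the end is sidestepped in the paper by encoding the weight through the shifted filtration $F(-q)$ rather than a Tate twist of the $\mathbf{Q}$-lattice, so it causes no actual difficulty.
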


\begin{proof}
If we define a shifted filtration $F(-q)$ on $\Omega_Z^{\bullet}$
by $F(-q)^r = F^{r-q}$, then 
\[
(\mathbf{Q}_Z,\Omega_Z^{\bullet},F(-q))
\]
is a cohomological Hodge $\mathbf{Q}$-complex of weight $2q$.
Hence
\[
(\mathbf{Q}_Z[-q],\Omega_Z^{\bullet}[-q],F(-q)[-q])
\]
is a cohomological Hodge $\mathbf{Q}$-complex of weight $q$.
\end{proof}

\begin{Cor}
The weight spectral sequence
\[
{}_WE_1^{p,q} = H^{p+q}(X,\text{Gr}^W_{-p}(H_{\mathbf{Q}}))
\Rightarrow H^{p+q}(H_{\mathbf{Q}})
\]
degenerates at $E_2$, the Hodge spectral sequence
\[
{}_FE_1^{p,q} = H^q(X,(\tilde{\Omega}^p_X(\log B))) \Rightarrow
H^{p+q}(\tilde{\Omega}^{\bullet}_X(\log B))
\]
degenerates at $E_1$, and the cohomology group
$H^n(H_{\mathbf{Q}})$ underlies a mixed Hodge $\mathbf{Q}$-structure
for any $n \ge 0$.
\end{Cor}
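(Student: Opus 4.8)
The plan is to deduce this Corollary directly from the preceding Theorem together with the Deligne formalism recalled in Section~2. The preceding Theorem asserts that
\[
((H_{\mathbf{Q}}, W), (\tilde{\Omega}^{\bullet}_X(\log B), W, F))
\]
is a cohomological mixed Hodge $\mathbf{Q}$-complex on $X$, so all three assertions should follow simply by passing to global sections and invoking the abstract degeneration results, with no new geometric input.

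First I would apply Proposition~\cite{D}~8.1.7 to this cohomological mixed Hodge complex. Since $X$ is proper, the functor $R\Gamma$ yields a mixed Hodge $\mathbf{Q}$-complex
\[
R\Gamma H = (R\Gamma(H_{\mathbf{Q}}, W), R\Gamma(\tilde{\Omega}^{\bullet}_X(\log B), W, F)).
\]
With this in hand, Theorem~\ref{degenerate} (\cite{D}~8.1.9) applies verbatim: part~(1) furnishes the mixed Hodge $\mathbf{Q}$-structure on each cohomology group $H^n(H_{\mathbf{Q}})$, part~(2) gives the degeneration of the weight spectral sequence at $E_2$, and part~(3) gives the degeneration of the Hodge spectral sequence at $E_1$.

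The one remaining point is to check that the abstract $E_1$-terms produced by Theorem~\ref{degenerate} agree with the explicit terms in the statement. For the weight spectral sequence this is immediate, since $R\Gamma$ turns the abstract term $H^{p+q}(\text{Gr}^W_{-p}(H_{\mathbf{Q}}))$ into $H^{p+q}(X, \text{Gr}^W_{-p}(H_{\mathbf{Q}}))$. For the Hodge spectral sequence, I would use the definition $F^p(\tilde{\Omega}^{\bullet}_X(\log B)) = \tilde{\Omega}^{\ge p}_X(\log B)$ to identify
\[
\text{Gr}^p_F(\tilde{\Omega}^{\bullet}_X(\log B)) \cong \tilde{\Omega}^p_X(\log B)[-p],
\]
whence the abstract term $H^{p+q}(\text{Gr}^p_F(H_{\mathbf{C}}))$ becomes $H^q(X, \tilde{\Omega}^p_X(\log B))$, exactly as asserted.

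There is essentially no serious obstacle at this stage: the substantive work was carried out in Lemma~\ref{Gr} and in the preceding Theorem, which verified that the convolution object $H_{\mathbf{Q}}$ together with its weight and Hodge filtrations really fits Deligne's axioms for a cohomological mixed Hodge complex. The only care required here is the bookkeeping of indices in matching the two spectral-sequence pages and in confirming that the shift convention in $\text{Gr}^p_F$ is consistent with $F^p = \tilde{\Omega}^{\ge p}_X(\log B)$. If I were to anticipate where a subtle issue could hide, it would be in ensuring that $R\Gamma$ of the convolution $H_{\mathbf{Q}}$ — which is not an honest complex of sheaves but an iterated cone — computes the hypercohomology compatibly with $W$; but this compatibility is guaranteed by the construction of $W_q(H_{\mathbf{Q}})$ as the corresponding convolution of canonical truncations, so no additional argument is needed.
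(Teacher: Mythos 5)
Your proposal is correct and follows exactly the route the paper intends: the Corollary is a direct consequence of the preceding Theorem via Deligne's Proposition~8.1.7 and Theorem~\ref{degenerate}, and your index bookkeeping for the $E_1$-terms (in particular $\text{Gr}^p_F(\tilde{\Omega}^{\bullet}_X(\log B)) \cong \tilde{\Omega}^p_X(\log B)[-p]$) is right. No gaps.
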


\begin{Expl}
Assume that $X = C_1 \cup C_2$, $C_1 \cong C_2 \cong \mathbf{P}^1$, 
$C_1 \cap C_2 = \{P_1,P_2\}$, and $B=0$.
Then
\[
\begin{split}
&\text{Gr}_F^0(\tilde{\Omega}_X^{\bullet})
= \tilde{\Omega}_X^0 
\cong \mathcal{O}_{C_1} \oplus \mathcal{O}_{C_2} \\
&\text{Gr}_F^1(\tilde{\Omega}_X^{\bullet}) 
= \tilde{\Omega}_X^1[-1] \\
&= \text{Ker}\{\Omega^1_{C_1}(\log (P_1+P_2)) 
\oplus \Omega^1_{C_2}(\log (P_1+P_2))
\to \mathcal{O}_{P_1} \oplus \mathcal{O}_{P_2}\}[-1] \\
&\cong \omega_X[-1]
\end{split}
\]
and 
\[
\begin{split}
&\text{Gr}^W_0(\tilde{\Omega}_X^{\bullet}) 
= \{\mathcal{O}_{C_1} \oplus \mathcal{O}_{C_2} 
\to \Omega^1_{C_1} \oplus \Omega^1_{C_2}\} \\
&\text{Gr}^W_1(\tilde{\Omega}_X^{\bullet}) 
= \text{Ker}\{(\mathcal{O}_{P_1}^2 \oplus \mathcal{O}_{P_2}^2)[-1]
\to (\mathcal{O}_{P_1} \oplus \mathcal{O}_{P_2})[-1]\} \\
&\cong (\mathcal{O}_{P_1} \oplus \mathcal{O}_{P_2})[-1]
\end{split}
\]
Therefore we have the following non-zero cohomology groups:
\[
\dim H^0(\text{Gr}_F^0(\tilde{\Omega}_X^{\bullet})) = 2,\,
\dim H^1(\text{Gr}_F^1(\tilde{\Omega}_X^{\bullet})) = 1,\,
\dim H^2(\text{Gr}_F^1(\tilde{\Omega}_X^{\bullet})) = 1
\]
and 
\[
\dim H^0(\text{Gr}^W_0(\tilde{\Omega}_X^{\bullet})) = 2,\,
\dim H^1(\text{Gr}^W_1(\tilde{\Omega}_X^{\bullet})) = 2,\,
\dim H^2(\text{Gr}^W_0(\tilde{\Omega}_X^{\bullet})) = 2.
\]
\end{Expl}

%%%%%%%%%%%%%%%%%%%%%%%%%%%%%%%%%%%%%%%%%%%%%%%%%%

\section{Relative case}

We shall extend the construction of the previous section to the 
relative setting.

We consider the situation of the main theorem.
We set $Y^o = Y \setminus C$, $X^o = f^{-1}(Y^o)$, 
$f^o = f \vert_{X^o}$ and $B^o = B \cap X^o$.
Thus $f^o$ is a smooth and projective morphism.
We set $X^{o[q]} = X^{[q]} \cap X^o$ and $B^{o[q]} = B^{[q]} \cap X^o$ with 
open immersions $i_q: X^{o[q]} \setminus B^{o[q]} \to X^{o[q]}$.

We define the relative De Rham complex 
\[
\tilde{\Omega}^{\bullet}_{X^o/Y^o}(\log B^o)
\]
to be the quotient of a differential graded algebra 
$\tilde{\Omega}^{\bullet}_{X^o}(\log B^o)$
by an ideal generated by a locally free subsheaf $f^*\Omega^1_{Y^o}$
of $\tilde{\Omega}^1_{X^o}(\log B^o)$.
Then 
\[
\tilde{\Omega}^{\dim X - \dim Y}_{X^o/Y^o}(\log B^o) 
\cong \omega_{X^o/Y^o}(B^o)
\]
is an invertible sheaf.
The residue exact sequence becomes
\[
\begin{split}
&0 \to \tilde{\Omega}^{\bullet}_{X^o/Y^o}(\log B^o)
\to \Omega^{\bullet}_{X^{o[0]}/Y^o}(\log B^{o[0]})
\to \Omega^{\bullet}_{X^{o[1]}/Y^o}(\log B^{o[1]})[-1] \\
&\to \Omega^{\bullet}_{X^{o[2]}/Y^o}(\log B^{o[2]})[-2]
\to \dots
\end{split}
\]
where the complexes $\Omega^{\bullet}_{X^{o[p]}/Y^o}(\log B^{o[p]})$ are 
defined similarly.

The {\em weight filtration} on the complex
$\tilde{\Omega}^{\bullet}_{X^o/Y^o}(\log B^o)$ is defined 
by the following exact sequence
\[
\begin{split}
&0 \to W_q(\tilde{\Omega}^{\bullet}_{X^o/Y^o}(\log B^o))
\to W_q(\Omega^{\bullet}_{X^{o[0]}/Y^o}(\log B^{o[0]})) \\
&\to W_{q-1}(\Omega^{\bullet}_{X^{o[1]}/Y}(\log B^{o[1]})[-1]) 
\to W_{q-2}(\Omega^{\bullet}_{X^{o[2]}/Y}(\log B^{o[2]})[-2])
\to \dots 
\end{split}
\]
where the $W$'s from the second terms denote 
the filtration with respect to the order of log poles,
and the {\em Hodge filtration} by
\[
F^p(\tilde{\Omega}^{\bullet}_{X^o/Y^o}(\log B^o))
= \tilde{\Omega}^{\ge p}_{X^o/Y^o}(\log B^o).
\]

Let $i_q^o: X^{o[q]} \setminus B^{o[q]} \to X^{o[q]}$ 
denote an open immersion for any $q$. 
Then we have 
\[
R(i_q^o)_*\mathbf{Q}_{X^{o[q]} \setminus B^{o[q]}} 
\otimes (f^o)^{-1}\mathcal{O}_{Y^o}
\cong \tilde{\Omega}^{\bullet}_{X^{o[q]}/Y^o}(\log B^{o[q]}).
\]
We deduce that there exists a convolution $H^o_{\mathbf{Q}}$ 
of the following complex of objects on $X^o$:
\[
\begin{split}
&R(i_0^o)_*\mathbf{Q}_{X^{o[0]} \setminus B^{o[0]}}
\to R(i_1^o)_*\mathbf{Q}_{X^{o[1]} \setminus B^{o[1]}}[-1] \\
&\to R(i_2^o)_*\mathbf{Q}_{X^{o[2]} \setminus B^{o[2]}}[-2] \to \dots
\end{split}
\]
such that 
\[
H^o_{\mathbf{Q}} \otimes (f^o)^{-1}\mathcal{O}_{Y^o}
\cong \tilde{\Omega}^{\bullet}_{X^o/Y^o}(\log B^o)
\]
as in the previous section.

Since our family $f^o$ is topologically locally trivial, 
the higher direct images
\[
R^pf^o_*R(i^o_q)_*\mathbf{Q}_{X^{o[q]} \setminus B^{o[q]}}
\]
are locally constant sheaves of $\mathbf{Q}$-modules on $Y^o$ for all 
$p$ and $q$, and the homomorphisms between them are flat.
It follows that the sheaf $R^nf^o_*H^o_{\mathbf{Q}}$ is also locally constant 
for any $n$.

In a similar way as before, we define a {\em weight filtration} 
$W_q(H_{\mathbf{Q}})$ on $H_{\mathbf{Q}}$ as convolutions 
of the following complexes: 
\[
\begin{split}
&\tau_{\le q}(R(i_0^o)_*\mathbf{Q}_{X^{o[0]} \setminus B^{o[0]}})
\to \tau_{\le q}(i_1^o)_*\mathbf{Q}_{X^{o[1]} \setminus B^{o[1]}}[-1]) \\
&\to \tau_{\le q}(i_2^o)_*\mathbf{Q}_{X^{o[2]} \setminus B^{o[2]}}[-2]) 
\to \dots
\end{split}
\]
such that 
\[
W_q(H^o_{\mathbf{Q}}) \otimes (f^o)^{-1}\mathcal{O}_{Y^o}
\cong W_q(\tilde{\Omega}^{\bullet}_{X^o/Y^o}(\log B^o)).
\]
The restrictions of above constructions to the fibers of 
$f$ coincide with those in the previous section.

The transversality of the natural connection $\nabla$ on 
$R^nf^o_*(\tilde{\Omega}^{\bullet}_{X^o/Y^o}(\log B^o))$
to the Hodge filtration follows from the topological local triviality of $f^o$.

Therefore we obtain the following:

\begin{Thm}
\[
((R^nf^o_*H^o_{\mathbf{Q}}, W), 
(R^nf^o_*(\tilde{\Omega}^{\bullet}_{X^o/Y^o}(\log B^o)), W, F))
\]
is a variation of mixed Hodge $\mathbf{Q}$-structures on $Y^o$
for any $n$.
\end{Thm}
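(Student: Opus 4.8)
The plan is to check, condition by condition, that the stated data meet the definition of a variation of mixed Hodge $\mathbf{Q}$-structures, deducing the pointwise statement from the absolute case of \S 3 and the coherence over $Y^o$ from cohomology-and-base-change together with the local constancy already in hand.

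I would begin on the rational side. It has already been shown that $R^nf^o_*H^o_{\mathbf{Q}}$ is a locally constant sheaf of finite rank, so it remains to see that its weight filtration is by locally constant subsheaves. This filtration is induced by the maps $R^nf^o_*W_q(H^o_{\mathbf{Q}}) \to R^nf^o_*H^o_{\mathbf{Q}}$, and the relative analogue of Lemma~\ref{Gr} together with $\mathrm{Gr}^W_q(H^o_{\mathbf{Q}}) \cong \bigoplus_{\dim X - \dim Z = q}\mathbf{Q}_{Z^o}[-q]$ identifies the graded pieces with direct sums of the higher direct images $R^{n-q}(f\vert_{Z^o})_*\mathbf{Q}_{Z^o}$. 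Since each $f\vert_Z$ is smooth over $Y^o$, these are locally constant; and the $E_2$-degeneration of the weight spectral sequence (Theorem~\ref{degenerate}(2), applied fiberwise) ensures that $W$ is strict, so that the induced subsheaves of $R^nf^o_*H^o_{\mathbf{Q}}$ are themselves locally constant. This establishes the filtered locally constant sheaf $(R^nf^o_*H^o_{\mathbf{Q}}, W)$.

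Next I would turn to the Hodge bundle and establish local freeness by a dimension count. For each $y \in Y^o$ the restriction of the construction to the fiber $(X_y, B_y)$ coincides with the absolute construction of \S 3, so by the Corollary there the fiberwise Hodge spectral sequence degenerates at $E_1$ and $H^n(H_{\mathbf{Q},y})$ carries a mixed Hodge structure; this is precisely the pointwise mixed Hodge structure required, and it is the fiber $H_y$. The ranks of the local systems $R^{n-q}(f\vert_{Z^o})_*\mathbf{Q}_{Z^o}$ are independent of $y$, and classical Hodge theory applied to the smooth families $f\vert_Z$ shows that their Hodge numbers are constant; summing over the $W$-graded pieces, the fiberwise dimensions $\dim H^q(X_y, \tilde{\Omega}^p_{X_y}(\log B_y))$ are constant on $Y^o$. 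Cohomology-and-base-change (Grauert) then yields that $R^nf^o_*(\tilde{\Omega}^{\bullet}_{X^o/Y^o}(\log B^o))$, together with each $F^p$ and each $W_q$, is locally free and commutes with restriction to fibers, giving the bifiltered locally free sheaf. The comparison $\alpha$ and its compatibility with $W$ follow by applying $R^nf^o_*$ to the isomorphism $H^o_{\mathbf{Q}} \otimes (f^o)^{-1}\mathcal{O}_{Y^o} \cong \tilde{\Omega}^{\bullet}_{X^o/Y^o}(\log B^o)$, and Griffiths transversality is the transversality of $\nabla$ already noted to follow from topological local triviality.

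The step I expect to be the main obstacle is the passage to local freeness of the individual filtration steps $F^p$ and $W_q$, not merely of the total bundle. Because $H^o_{\mathbf{Q}}$ is only a convolution, the filtrations need not be strict after pushing forward, and without strictness the fiberwise $E_1$- and $E_2$-degenerations do not by themselves force the dimensions $\dim \bigl(F^pR^nf^o_*(\tilde{\Omega}^{\bullet}_{X^o/Y^o}(\log B^o))\otimes k(y)\bigr)$ to be constant. I would handle this exactly as in \S 3: reduce the graded pieces for $W$ to honest smooth families over the strata $Z$, where semisimplicity of polarized variations and the strictness of morphisms of pure Hodge structures apply, and then propagate strictness up the weight filtration. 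Once the $W$- and $F$-graded dimensions are shown constant, Grauert's theorem upgrades each filtration step to a subbundle compatible with base change, completing the verification.
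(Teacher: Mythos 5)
Your proposal is correct and follows essentially the same route as the paper, which states this theorem as a direct consequence (``Therefore we obtain the following'') of the preceding constructions: fiberwise reduction to the absolute cohomological mixed Hodge complex of \S 3, local constancy of $R^nf^o_*H^o_{\mathbf{Q}}$ and its weight filtration from the topological local triviality of $f^o$, and Griffiths transversality from the same source. Your additional care about local freeness of the individual $F^p$ and $W_q$ steps (constancy of graded dimensions via strictness and the $E_1$-/$E_2$-degenerations, then Grauert) fills in details the paper leaves implicit, but introduces no new idea beyond the paper's argument.
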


\begin{Cor}\label{spec}
The weight spectral sequence
\[
{}_WE_1^{p,q} 
= R^{p+q}f_*\text{Gr}^W_{-p}(H_{\mathbf{Q}}) 
\Rightarrow R^{p+q}f_*(H_{\mathbf{Q}})
\]
degenerates at $E_2$, and the Hodge spectral sequence
\[
{}_FE_1^{p,q} = R^qf_*(\tilde{\Omega}^p_{X^o/Y^o}(\log B^o)) 
\Rightarrow R^{p+q}f_*f^{-1}\mathcal{O}_{Y^o}
\]
degenerates at $E_1$.
\end{Cor}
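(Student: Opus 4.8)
The plan is to reduce both degenerations to the corresponding assertions of the absolute Corollary of \S 3, applied to each fibre of $f^o$, and then to propagate the fibrewise vanishing of the differentials across $Y^o$ by the local constancy of the sheaves involved. Both displayed spectral sequences are spectral sequences of sheaves on $Y^o$, arising by applying $Rf^o_*$ to the weight (resp. Hodge) filtration on the relative complex constructed above, so degeneration is exactly the vanishing of the higher differentials $d_r$ as morphisms of sheaves, and such vanishing may be tested at the individual points $y \in Y^o$.

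For the weight spectral sequence I would argue as follows. Since $f^o$ is topologically locally trivial, each $E_1$-term $R^{p+q}f^o_*\text{Gr}^W_{-p}(H^o_{\mathbf{Q}})$ is a locally constant sheaf whose stalk at $y$ is $H^{p+q}$ of the graded piece on the fibre, so the restriction of the relative weight spectral sequence to $y$ is precisely the absolute weight spectral sequence of \S 3 for the simple normal crossing pair carried by $(f^o)^{-1}(y)$. That sequence degenerates at $E_2$ by the absolute Corollary, which in turn follows because $R\Gamma$ of the cohomological mixed Hodge complex $((H_{\mathbf{Q}},W),(\tilde{\Omega}^{\bullet}_X(\log B),W,F))$ is a mixed Hodge complex, to which Theorem \ref{degenerate} applies. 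As the terms, and hence the differentials $d_r$ with $r \ge 2$, are maps of locally constant sheaves, and each restricts fibrewise to a vanishing map, the relative $d_r$ vanish identically.

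For the Hodge spectral sequence the same scheme applies once the $E_1$-terms are known to commute with restriction to fibres. Here I would first note that the total object $R^nf^o_*\tilde{\Omega}^{\bullet}_{X^o/Y^o}(\log B^o)$, being the coherent part of the variation of mixed Hodge structures just constructed, is locally free with fibre dimension the topologically invariant rank of $R^nf^o_*H^o_{\mathbf{Q}}$. The fibrewise degeneration from the absolute Corollary gives, for each $y$, the identity $\dim \mathbb{H}^n = \sum_{p+q=n} \dim H^q(\tilde{\Omega}^p)$ on the fibre; summing over $n$ and using the upper semicontinuity of each $\dim H^q(\tilde{\Omega}^p)$ forces every such fibre dimension to be locally constant, whence Grauert's theorem shows each $R^qf^o_*\tilde{\Omega}^p_{X^o/Y^o}(\log B^o)$ locally free and compatible with base change. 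The restriction to $y$ is then the absolute Hodge spectral sequence, which degenerates at $E_1$, and the maps $d_r$ with $r \ge 1$ vanish by the same propagation argument.

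The step I expect to be the main obstacle is precisely this passage from fibrewise to relative information: establishing that $Rf^o_*$ commutes with restriction to fibres simultaneously for the $\mathbf{Q}$- and the $\mathcal{O}$-coefficient complexes, so that the two restricted spectral sequences genuinely coincide with the absolute ones of \S 3. Topological local triviality delivers this on the $\mathbf{Q}$-side at once, but on the $\mathcal{O}$-side it is available only after the local freeness of the Hodge terms has been extracted from the fibrewise $E_1$-degeneration together with the constancy of the total dimensions; organizing this without circularity is the delicate point of the argument.
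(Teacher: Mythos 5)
Your argument is correct and is essentially the proof the paper intends: the corollary is stated without a separate proof precisely because it follows from restricting the relative constructions to the fibres (where the absolute cohomological mixed Hodge complex of \S 3 gives $E_2$- resp.\ $E_1$-degeneration via Theorem~\ref{degenerate}) and then propagating by topological local triviality on the $\mathbf{Q}$-side and by the standard constant-dimension/Grauert argument on the $\mathcal{O}$-side. Your worry about circularity in the coherent step is unfounded in the way you yourself resolve it, and your write-up matches the paper's (implicit) reasoning.
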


%%%%%%%%%%%%%%%%%%%%%%%%%%%%%%%%%%%%%%%%%%%%%%%%%%%%%%%%%%%%%%%%%%%%%%%%%%%%%%%

\section{Semipositivity theorem}

We define invertible sheaves $\omega_{X/Y}(B)$ 
and $\omega_{X^{[p]}/Y}(B^{[p]})$ on $X$ and $X^{[p]}$ rspectively by
\[
\omega_{X/Y}(B) = \omega_X(B) \otimes f^*\omega_Y, \quad
\omega_{X^{[p]}/Y}(B^{[p]}) = \omega_{X^{[p]}}(B^{[p]}) \otimes f^*\omega_Y.
\]
Then the residue homomorphisms yield an exact sequence of sheaves on 
$X$ (not only on $X^o$):
\[
0 \to \omega_{X/Y}(B) \to \omega_{X^{[0]}/Y}(B^{[0]})
\to \omega_{X^{[1]}/Y}(B^{[1]}) \to \omega_{X^{[2]}/Y}(B^{[2]})
\to \dots
\]

The semipositivity theorem (Theorem~\ref{main}) 
follows from the combination of the following theorem 
with a result in \cite{CA}:

\begin{Thm}
Let $\tilde{\mathcal{H}}^n$ be the canonical extension of the
variation of mixed Hodge structures
$\mathcal{H}^n = R^nf^o_*(\tilde{\Omega}^{\bullet}_{X^o/Y^o}(\log B^o))$,
and let $\tilde{F}$ be the extended Hodge filtration 
on $\tilde{\mathcal{H}}^n$.
Then there is an isomorphism
\[
R^qf_*\omega_{X/Y}(B) \cong F^{d(X/Y)}(\tilde{\mathcal{H}}^{q+d(X/Y)})
\]
where $d(X/Y) = \dim X - \dim Y$.
\end{Thm}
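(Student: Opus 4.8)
The plan is to establish the isomorphism first over the open base $Y^o = Y \setminus C$, where it is essentially formal, and then to control the two extensions across $C$ by reducing everything, via the weight filtration, to the already-known irreducible (pure) case applied to each closed stratum.

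First I would work over $Y^o$, where $f^o$ is smooth and projective. Write $d = d(X/Y)$. Since the relative complex $\tilde{\Omega}^{\bullet}_{X^o/Y^o}(\log B^o)$ is supported in degrees $0,\dots,d$ and $\tilde{\Omega}^d_{X^o/Y^o}(\log B^o) \cong \omega_{X^o/Y^o}(B^o)$, the stupid truncation defining $F^d$ is simply $\omega_{X^o/Y^o}(B^o)[-d]$, concentrated in top degree. The degeneration at $E_1$ of the Hodge spectral sequence (Corollary~\ref{spec}) then yields $F^d\mathcal{H}^{q+d} \cong R^qf^o_*\omega_{X^o/Y^o}(B^o)$, which is exactly the restriction of $R^qf_*\omega_{X/Y}(B)$ to $Y^o$. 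Thus both sides of the claimed isomorphism agree over $Y^o$, and the entire content lies in matching their extensions across the boundary $C$.

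Next I would pass to the weight filtration. Because the local monodromies are unipotent and $W$ filters $H^o_{\mathbf{Q}}$ by local subsystems, the canonical extension is exact on weight-graded pieces and the extended Hodge filtration $\tilde{F}$ is compatible with $\mathrm{Gr}^W$; this is precisely the pure-to-mixed reduction recorded in \S2.2, where the extension of $F$ to $\tilde{\mathcal{H}}$ is deduced from the nilpotent orbit theorem for pure variations. Relativizing Lemma~\ref{Gr}, the graded piece $\mathrm{Gr}^W_k(\mathcal{H}^{q+d})$ decomposes as $\bigoplus_{\dim X - \dim Z = k}$ of the pure variation $R^{q+d_Z}(f\vert_{Z^o})_*\Omega^{\bullet}_{Z^o/Y^o}$, Tate-twisted by $k$, where $Z$ ranges over the closed strata of the pair and $d_Z = \dim Z - \dim Y = d - k$. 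Tracking the shift of $F$ by $k$, the top piece becomes $\mathrm{Gr}^W_k(F^d\mathcal{H}^{q+d}) \cong \bigoplus_Z F^{d_Z}\mathcal{H}^{q+d_Z}_Z$, the top Hodge bundles of the smooth families $f\vert_{Z^o}$. For each such $Z$ the morphism $f\vert_Z: Z \to Y$ is projective from a smooth variety, smooth over $Y^o$, with unipotent monodromy, so the irreducible semipositivity theorem quoted in the introduction (\cite{CA}, \cite{Kollar}, \cite{Nakayama}, \cite{Fujino}) identifies the canonical extension of $F^{d_Z}\mathcal{H}^{q+d_Z}_Z$ with $R^q(f\vert_Z)_*\omega_{Z/Y}$. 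On the algebraic side, the residue exact sequence $0 \to \omega_{X/Y}(B) \to \omega_{X^{[0]}/Y}(B^{[0]}) \to \omega_{X^{[1]}/Y}(B^{[1]}) \to \dots$, refined along $B$ so that the $X^{[p]}$ are replaced by the strata of the pair, induces a filtration on $R^qf_*\omega_{X/Y}(B)$ whose graded pieces are again $\bigoplus_Z R^q(f\vert_Z)_*\omega_{Z/Y}$. Local freeness of $R^qf_*\omega_{X/Y}(B)$ then follows since each graded piece, being a canonical extension of a Hodge bundle, is locally free.

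The hard part will be the final matching step: upgrading the stratum-by-stratum agreement on graded pieces to a genuine isomorphism of the two \emph{filtered} sheaves over all of $Y$. Over $Y^o$ the identification of the first paragraph is canonical, but across $C$ one must verify that the algebraically defined residue filtration on $R^qf_*\omega_{X/Y}(B)$ and the Hodge-theoretic weight filtration on $\tilde{F}^d\tilde{\mathcal{H}}^{q+d}$ carry the same extension data—i.e. that the connecting maps of the residue sequence coincide, after canonical extension, with the monodromy-weight connecting maps, and that the governing spectral sequences degenerate (the weight one at $E_2$, as in Corollary~\ref{spec}). Concretely this amounts to checking that the canonical extension functor respects the convolution structure used to assemble $H_{\mathbf{Q}}$ and its weight filtration, and it is here that the unipotent-monodromy hypothesis together with Schmid's nilpotent orbit theorem do the essential work; I expect this compatibility, rather than any individual identification, to be the main obstacle.
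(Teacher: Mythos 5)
Your strategy is the same as the paper's: compare the Hodge-theoretic weight spectral sequence of $\mathcal{H}^{q+d}$ (after canonical extension) with the algebraic spectral sequence coming from the residue filtration on $\omega_{X/Y}(B)$, identify their $E_1$ pages stratum by stratum via the Koll\'ar--Nakayama isomorphism $F^{d_Z}(\tilde{\mathcal{H}}^{\bullet}_Z) \cong R^{\bullet}(f\vert_Z)_*\omega_{Z/Y}$, and conclude. But the step you defer at the end is not a technical compatibility to be checked later; it is the entire content of the theorem, and as stated your proposal does not prove it. Moreover you frame the missing step as showing that ``the canonical extension functor respects the convolution structure used to assemble $H_{\mathbf{Q}}$,'' which is both stronger than what is needed and not how the difficulty is actually resolved: no statement about convolutions at the level of complexes is required.

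What the paper actually does to close the gap is an argument at the level of the two spectral sequences only. First, the $d_1$ differential of the Hodge-theoretic weight spectral sequence is a sum of connecting homomorphisms of residue sequences and is a morphism of variations of Hodge structures of bidegree $(1,1)$; taking its highest Hodge piece gives exactly the connecting homomorphism of the adjunction sequence $0 \to \omega_{Z'/Y} \to \omega_{Z'/Y}(Z) \to \omega_{Z/Y} \to 0$ restricted to $Y^o$, i.e.\ the $\bar d_1$ of the algebraic spectral sequence. Hence the $E_2$ terms of the Hodge-theoretic sequence are again variations of Hodge structures, their canonical extensions are the $E_2$ terms of the extended sequence, and $F^{p+d}({}_W\tilde E_2^{p,q}) \cong {}_W\bar E_2^{p,q-d}$ over all of $Y$ (not just $Y^o$), because both sides are locally free and agree on the dense open set $Y^o$. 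Second, the same density argument kills the higher differentials: the weight spectral sequence over $Y^o$ degenerates at $E_2$ by Theorem~\ref{degenerate}, so $\tilde d_2$ and $\bar d_2$ are morphisms of locally free sheaves on the irreducible variety $Y$ vanishing on $Y^o$, hence vanish identically, and inductively all $\tilde d_m$, $\bar d_m$ for $m \ge 2$ vanish. This simultaneously proves the degeneration of the algebraic spectral sequence (which you assumed implicitly when you asserted that the graded pieces of $R^qf_*\omega_{X/Y}(B)$ are $\bigoplus_Z R^q(f\vert_Z)_*\omega_{Z/Y}$ --- a priori those are only the $E_1$ terms) and yields the filtered isomorphism. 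You should incorporate this two-step argument --- matching of $d_1$ via the highest Hodge piece, then local freeness plus density of $Y^o$ --- to complete the proof.
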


\begin{proof}
We put a weight filtration on $\omega_{X/Y}(B)$ by exact sequences
\[
\begin{split}
&0 \to W_q(\omega_{X/Y}(B)) \to W_q(\omega_{X^{[0]}/Y}(B^{[0]})) \\
&\to W_{q-1}(\omega_{X^{[1]}/Y}(B^{[1]})) 
\to W_{q-2}(\omega_{X^{[2]}/Y}(B^{[2]})) \to \dots
\end{split}
\]
where the $W$ from the second terms are filtrations with respect to the 
order of log poles.
It is the part of the highest Hodge filtrations of the weight filtration
in the previous section if we restrict the sheaves over $X^o$ and 
shift the degree by $-d(X/Y)$.

Let $X^{[p],[q]}$ be the disjoint 
union of all the closed strata of the pair $(X,B)$ 
which are contained in $X^{[p]}$ and has codimension $q$ in $X^{[p]}$.
Then we have 
\[
\begin{split}
&\text{Gr}^W_q(\omega_{X^{[p]}/Y}(B^{[p]})) 
\cong \omega_{X^{[p],[q]}/Y} \\
&\text{Gr}^W_q(\omega_{X/Y}(B)) 
\cong \bigoplus_{\dim X - \dim Z = q} \omega_{Z/Y}
\end{split}
\]
as before.

By Corollary~\ref{spec} and Lemma~\ref{Gr}, 
we have a weight spectral sequence
\begin{equation}\label{spec1}
{}_WE_1^{p,q} 
= \bigoplus_{dim X - \dim Z = -p} R^{2p+q}f^o_*\Omega_{Z^o/Y^o}^{\bullet}
\Rightarrow R^{p+q}f^o_*(\tilde{\Omega}^{\bullet}_{X^o/Y^o}(\log B^o))
\end{equation}
which degenerates at $E_2$, where we put $Z^o=Z \cap X^o$.
The boundary homomorphism $d_1^{p,q}$ of the spectral sequence (\ref{spec1}) 
at the $E_1$-level
is given by the sum of the connecting homomorphisms
\begin{equation}\label{conn1}
R^{2p+q}f^o_*\Omega^{\bullet}_{Z^o/Y^o} 
\to R^{2p+q+2}f^o_*\Omega^{\bullet}_{(Z')^o/Y^o}
\end{equation}
of the exact sequences
\[
0 \to \Omega^{\bullet}_{(Z')^o/Y^o} \to \Omega^{\bullet}_{(Z')^o/Y^o}(\log Z^o) 
\to \Omega^{\bullet}_{Z^o/Y^o}[-1] \to 0
\]
for the inclusions $Z \subset Z'$ of the closed strata of the pair $(X,B)$ 
such that $\dim Z = \dim X + p$ and $\dim Z' = \dim X + p + 1$,
where we put $(Z')^o=Z' \cap X^o$.

Let $\tilde{\mathcal{H}}^{2p+q}_Z$ be the canonical extensions of 
the variations of Hodge structures
$\mathcal{H}^{2p+q}_Z = R^{2p+q}f^o_*\Omega^{\bullet}_{Z^o/Y^o}$ for 
the closed strata $Z$ of codimension $-p$.
Since the formation of canonical extensions is functorial, 
(\ref{spec1}) extends to another spectral sequence
\begin{equation}\label{spec1-can}
{}_W\tilde E_1^{p,q} 
= \bigoplus_{dim X - \dim Z = -p} \tilde{\mathcal{H}}^{2p+q}_Z
\Rightarrow \tilde{\mathcal{H}}^{p+q}
\end{equation}

It is known by \cite{Kollar} and \cite{Nakayama} (and \cite{CA} if $q=0$) 
that the highest Hodge parts of the canoical extensions are 
isomorphic to the higher direct images of dualizing sheaves:
\[
F^{p+d(X/Y)}(\tilde{\mathcal{H}}^{2p+q}_Z) 
\cong R^{p+q-d(X/Y)}f_*\omega_{Z/Y}.
\]
In particular, the right hand side are locally free sheaves on $Y$.

On the other hand, we have another spectral sequence with respect 
to the weight filtration
\begin{equation}\label{spec2}
{}_W\bar E_1^{p,q} 
= \bigoplus_{\dim X - \dim Z = -p} R^{p+q}f_*\omega_{Z/Y}
\Rightarrow R^{p+q}f_*(\omega_{X/Y}(B))
\end{equation}
so that
\[
{}_W\bar E_1^{p,q-d(X/Y)}
\cong F^{p+d(X/Y)}({}_W\tilde E_1^{p,q}).
\]
The boundary homomorphism $\bar d_1^{p,q}$ of (\ref{spec2})
is given by the sum of the connecting homomorphisms
\begin{equation}\label{conn2}
R^{p+q}f_*\omega_{Z/Y} \to R^{p+q+1}f_*\omega_{Z'/Y}
\end{equation}
of the adjunction sequences
\[
0 \to \omega_{Z'/Y} \to \omega_{Z'/Y}(Z) \to \omega_{Z/Y} \to 0.
\]

The homomorphism (\ref{conn1}) is a morphism of variations of Hodge structures
of bidegree $(1,1)$.
If we take the parts of the highest Hodge fitrations in (\ref{conn1}), then 
we obtain a homomorphism
\[
R^{p+q-d(X/Y)}f^o_*\omega_{Z^o/Y^o} 
\to R^{p+q+1-d(X/Y)}f^o_*\omega_{(Z')^o/Y^o}
\]
which coincides with the restriction of the homomorphism (\ref{conn2}) 
to $Y^o$ if we shift the degree by $d(X/Y)$.
Therefore the $E_2$ terms of the spectral sequence 
(\ref{spec1}) are also variations of Hodge structures, and we have
\[
F^{p+d(X/Y)}({}_WE_2^{p,q}) \cong {}_W\bar E_2^{p,q-d(X/Y)} \vert_{Y^o}.
\]
It follows also that the canoical extensions of the ${}_WE_2^{p,q}$ coincide 
with the ${}_W\tilde E_2^{p,q}$, and 
\[
F^{p+d(X/Y)}({}_W\tilde E_2^{p,q}) \cong {}_W\bar E_2^{p,q-d(X/Y)}.
\]

We already know that the boundary homomorphisms $d_m^{p,q}$ 
of the spectral sequence (\ref{spec1})
vanish for $m \ge 2$ and all $p,q$.
Since the sheaves ${}_W\tilde E_2^{p,q}$ and ${}_W\bar E_2^{p,q}$ 
are locally free, it follows that $\tilde d_2^{p,q} = \bar d_2^{p,q} = 0$
for the corresponding boundary homomorphisms of the spectral sequences
(\ref{spec1-can}) and (\ref{spec2}), respectively,
hence $\tilde d_m^{p,q} = \bar d_m^{p,q} = 0$ for $m \ge 2$ and all $p,q$,
and we obtain the theorem.
\end{proof}

Let $L$ be the polarization of the total space 
$X$ given by the cohomology class of an ample line bundle.
We define the primitive parts of the cohomology groups thoughout the argument
as the kernels with respect to the operation taking the cup product with $L$.
Since this operation is compatible with all other operations, 
our variations of mixed $\mathbf{Q}$-Hodge structures 
become graded polarizable.
We note that the highest Hodge parts are not affected by this operation 
because they always vanish when multiplied by $L$.
By the semipositivity theorem of \cite{CA}, the sheaves 
$F^{p+d(X/Y)}({}_W\tilde E_2^{p,q})$ are numerically semipositive.
Since $R^qf_*\omega_{X/Y}(B)$ is an extension of these sheaves, 
it is also numerically semipositive.
This is the conclusion of the proof of Theorem~\ref{main}.

%%%%%%%%%%%%%%%%%%%%%%%%%%%%%%%%%%%%%%%%%%%%%%%%%%%%%%%%%%%%

\end{document}